\documentclass[a4paper,11pt]{amsart}
\usepackage{amssymb,amsfonts,amsxtra,amscd,mathrsfs,setspace}
\usepackage{fullpage}
\setlength{\hoffset}{-1pt}
\setlength{\voffset}{-1pt}
\addtolength{\textwidth}{2pt}
\addtolength{\textheight}{2pt}
\setstretch{0.99}
\theoremstyle{plain}
\newtheorem{theorem}{Theorem}[section]
\newtheorem{lemma}[theorem]{Lemma}
\newtheorem{cor}[theorem]{Corollary}

\theoremstyle{definition}
\newtheorem{defi}[theorem]{Definition}
\newtheorem{example}[theorem]{Example}
\theoremstyle{remark}
\newtheorem{rem}[theorem]{Remark}
\numberwithin{equation}{section}
\newcommand{\sym}[1]{\ensuremath{\mathbb{S}_{#1}}}
\newcommand{\uenv}[1]{\ensuremath{\mathcal{U}(#1)}}
\newcommand{\cuenv}[1]{\ensuremath{\widehat{\mathcal{U}}(#1)}}
\newcommand{\invlim}[2]{\ensuremath{\varprojlim_{#1} #2}}
\newcommand{\cotimes}{\ensuremath{\widehat{\otimes}}}

\DeclareMathOperator{\Der}{Der}
\DeclareMathOperator{\id}{id}

\begin{document}
\title{A Poincar\'e-Birkhoff-Witt Theorem for profinite pronilpotent Lie algebras}
\author{Alastair Hamilton}
\address{Department of Mathematics and Statistics, Texas Tech University, Lubbock, TX 79409-1042. USA.} \email{alastair.hamilton@ttu.edu}
\begin{abstract}
We prove a version of the Poincar\'e-Birkhoff-Witt Theorem for profinite pronilpotent Lie algebras in which their symmetric and universal enveloping algebras are replaced with appropriate formal analogues and discuss some immediate corollaries of this result.
\end{abstract}
\keywords{Poincar\'e-Birkhoff-Witt, pronilpotent Lie algebra, formal power series, infinite-dimensional Lie algebra, profinite vector space.}
\subjclass[2010]{13J05, 13J10, 16S10, 16W70, 17B01, 17B35, 17B65.}
\thanks{The work of the author is supported by the Simons Foundation, grant 279462.}
\maketitle

\section{Introduction}

The standard Poincar\'e-Birkhoff-Witt Theorem provides a linear bijection between the symmetric algebra $S(\mathfrak{g})$ and the universal enveloping algebra $\uenv{\mathfrak{g}}$ of a Lie algebra $\mathfrak{g}$. In this short article we present a formal analogue of this theorem that replaces the algebra of polynomials $S(\mathfrak{g})$ with an algebra of formal power series. This theorem applies to a class of Lie algebras that we refer to as \emph{profinite pronilpotent Lie algebras} as they are inverse limits of finite-dimensional nilpotent Lie algebras -- the name \emph{formal Lie algebra} would be more appropriate if the term `formal' did not already carry several different meanings. These Lie algebras carry a simple topology allowing us to talk about the convergence of power series type expressions within them. Relevant examples tend to be infinite-dimensional, occurring for example in the study of various homotopy algebraic structures in which one must keep track of an infinite hierarchy of multilinear operations \cite{ncgeom}. This may explain why this analogue of the Poincar\'e-Birkhoff-Witt Theorem does not appear to have been considered before elsewhere.

In this paper we work in the category of graded profinite vector spaces, which are inverse limits of finite-dimensional vector spaces. These spaces sometimes go under the heading of `linearly compact vector space', cf. \cite{lincom}. This is the category housing such objects as algebras of formal power series. We recall some basic facts about and constructions involving them in Section \ref{sec_profinite}. In Section \ref{sec_PBW} we prove a formal analogue of the Poincar\'e-Birkhoff-Witt Theorem which states that a certain completion of the universal enveloping algebra of a profinite pronilpotent Lie algebra is linearly homeomorphic to an algebra of formal power series. The proof will make use of the standard Poincar\'e-Birkhoff-Witt Theorem, which we remind the reader holds also in the graded context \cite{supPBW}. The pronilpotence condition plays an essential role in the proof, ensuring that certain expressions converge appropriately. In Section \ref{sec_cor} we discuss some immediate corollaries of this result.

\subsection*{Notation and conventions}

We work with graded vector spaces over a ground field of characteristic zero. Our convention will be to suppress any further use of the adjective `graded' with the understanding that everything takes place within the graded context, making appropriate use of the Koszul sign rule for instance in defining the symmetric algebra and the action of the symmetric group $\sym{n}$ on tensor products. The closure of a subset $W$ of a topological space $X$ will be denoted by $\overline{W}$.

\section{Profinite vector spaces and algebras} \label{sec_profinite}

In this section we describe the category of profinite vector spaces as well as various examples of the types of formal algebraic objects that live in this category.

\begin{defi} \label{def_profvs}
A \emph{profinite vector space} is a vector space,
\begin{equation} \label{eqn_invlim}
V=\invlim{\alpha}{V_\alpha}
\end{equation}
which is an inverse limit of finite-dimensional vector spaces $V_{\alpha}$. The inverse system is part of the data and we assume, for convenience, that the projections $V\to V_\alpha$ are surjective. The kernels of these projections provide a basis for the neighborhoods of zero in the inverse limit topology on $V$, which is complete\footnote{In the sense that every Cauchy filter converges.} and Hausdorff. A morphism of profinite vector spaces is a linear map that is \emph{continuous} in the inverse limit topology.
\end{defi}

\begin{lemma} \label{lem_extend}
Any continuous linear map $f:M\to W$ that maps a dense subspace $M$ of a profinite vector space $V$ to another profinite vector space $W$ has a unique continuous linear extension from $V$ to $W$.
\end{lemma}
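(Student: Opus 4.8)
The plan is to prove this by a standard density/completeness argument. The statement is a classical "continuous extension from a dense subspace" result, adapted to the inverse-limit setting.

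The plan is to treat this as the standard extension-by-continuity theorem, taking care that the inverse limit topology need not be first countable, so that one must argue with Cauchy filters rather than Cauchy sequences (as the footnote to Definition \ref{def_profvs} already anticipates). Uniqueness is the easy half and I would dispatch it first: if $\bar f_1$ and $\bar f_2$ are two continuous linear extensions, their difference is continuous and vanishes on $M$, so the locus $\{v:\bar f_1(v)=\bar f_2(v)\}$ is the preimage of the diagonal under a continuous map into $W\times W$ and is therefore closed because $W$ is Hausdorff; since it contains the dense subspace $M$ it must be all of $V$.

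For existence I would build the extension pointwise. Given $v\in V$, let $\mathcal F_v$ denote the filter on $M$ generated by the traces $U\cap M$ of the neighborhoods $U$ of $v$ in $V$; density of $M$ guarantees these traces are nonempty, so $\mathcal F_v$ is a genuine filter converging to $v$, hence Cauchy. The key step is to check that $f$ transports this to a Cauchy filter in $W$: because $f$ is linear and continuous at the origin it is in fact \emph{uniformly} continuous, so for each basic neighborhood $U'$ of $0$ in $W$ there is a neighborhood $U$ of $0$ in $V$ with $f(U\cap M)\subseteq U'$, and feeding a set $A\in\mathcal F_v$ with $A-A\subseteq U$ through $f$ yields $f(A)-f(A)=f(A-A)\subseteq U'$ (using that $A-A\subseteq U\cap M$, as $M$ is a subspace). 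Thus $f(\mathcal F_v)$ is Cauchy, and the completeness together with the Hausdorff property of $W$ furnished by Definition \ref{def_profvs} produce a unique limit, which I define to be $\bar f(v)$. Taking $v\in M$ recovers $f(v)$, so $\bar f$ genuinely extends $f$.

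It then remains to verify that $\bar f$ is linear and continuous. I would first record the refinement principle that, for any filter base $\mathcal G$ on $M$ converging to $w$ in $V$, the image $f(\mathcal G)$ still converges to $\bar f(w)$; this holds because such a $\mathcal G$ refines $\mathcal F_w$, and a refinement of a convergent filter shares its limit in the Hausdorff space $W$. Linearity is then immediate: the sum filter $\mathcal F_v+\mathcal F_{v'}$ lands in $M$ and converges to $v+v'$ by continuity of addition, so the refinement principle, the linearity of $f$ on $M$, and the uniqueness of limits give $\bar f(v+v')=\bar f(v)+\bar f(v')$, with scalar multiplication handled identically. For continuity I reuse uniform continuity: choosing $U$ as above and taking $U'$ to be one of the \emph{closed} basic neighborhoods of $0$ in $W$ (the kernels of the projections onto the finite-dimensional quotients are open subspaces, hence closed), the filter $f(\mathcal F_v)$ is eventually contained in $U'$ for every $v\in U$, so its limit satisfies $\bar f(v)\in\overline{U'}=U'$; thus $\bar f(U)\subseteq U'$, and linearity upgrades continuity at $0$ to continuity everywhere.

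The main obstacle here is conceptual rather than computational: ensuring the argument survives the possible failure of first countability. Organizing everything around the neighborhood-trace filters $\mathcal F_v$ and the refinement principle keeps the construction choice-free and sidesteps any appeal to sequences, while the only genuinely analytic input---that Cauchy filters converge---is precisely the completeness built into Definition \ref{def_profvs}.
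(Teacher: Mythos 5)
Your proof is correct and is essentially the argument the paper has in mind: the paper omits the proof entirely, remarking only that it is ``very similar to that of Theorem 5.1 of \cite{treves}'', and that classical extension-by-continuity result is proved by exactly the Cauchy-filter construction you carry out (uniqueness via the Hausdorff property and density, existence via transporting the neighborhood-trace filters, continuity via the clopen basic neighborhoods). Your writeup is a valid filling-in of the details the paper chose to leave out, with the non-first-countability issue handled properly by working with filters rather than sequences.
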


\begin{proof}
This is possible as $W$ is Hausdorff and complete. The proof, while not particularly short, is straightforward and very similar to that of Theorem 5.1 of \cite{treves}. Hence we omit it.
\end{proof}

The category of profinite vector spaces contains arbitrary products (which are inverse limits of finite products) and \emph{completed} tensor products, which are defined by,
\begin{equation} \label{eqn_cotimes}
V\cotimes W:= \invlim{\alpha,\beta}{V_\alpha\otimes W_\beta}
\end{equation}
and which are distributive with respect to products. The inverse limit topology \eqref{eqn_invlim} on such products of profinite spaces coincides with the product topology. Note that $\cotimes$ and $\otimes$ agree on finite-dimensional vector spaces. The universal property of limits ensures the existence of a canonical embedding of the usual tensor product $V\otimes W$ inside $V\cotimes W$ as a dense subspace in the inverse limit topology. This allows us, given morphisms $f$ and $g$ of profinite vector spaces, to define $f\cotimes g$ using Lemma \ref{lem_extend} as the unique continuous extension of $f\otimes g$.

This definition allows us to define certain formal objects, such as algebras of formal power series.

\begin{defi}
Given a profinite vector space $V$, we define its completed tensor algebra by,
\[ \widehat{T}(V):=\prod_{n=0}^\infty V^{\cotimes n}. \]
\end{defi}

It follows from \eqref{eqn_cotimes} that
\begin{equation} \label{eqn_tenlim}
\widehat{T}(V)=\invlim{\alpha}{\widehat{T}(V_\alpha)}.
\end{equation}
The preceding discussion implies that the usual tensor algebra,
\[ T(V):=\bigoplus_{n=0}^\infty V^{\otimes n} \]
sits inside $\widehat{T}(V)$ as a dense subspace. The usual multiplication on $T(V)$ extends uniquely to a continuous map,
\begin{equation} \label{eqn_tenmul}
\widehat{T}(V)\cotimes\widehat{T}(V) \to \widehat{T}(V),
\end{equation}
turning the completed tensor algebra into an algebra of formal noncommutative power series.

Using again the universal property of limits we see that the action of every permutation $\sigma\in \sym{n}$ on $V^{\otimes n}$ extends continuously to $V^{\cotimes n}$. We denote the space of coinvariants by $V^{\cotimes n}/\sym{n}$ (which is a quotient by a closed subspace, the kernel of the map \eqref{eqn_invmap}). We will also need the subspace of invariants $(V^{\cotimes n})^{\sym{n}}$, which coincides with the closure of the usual subspace of invariants $(V^{\otimes n})^{\sym{n}}$ in $V^{\cotimes n}$. Coinvariants and invariants are of course isomorphic through the map;
\begin{equation} \label{eqn_invmap}
\Phi_n^{\Sigma}: V^{\cotimes n} \to V^{\cotimes n}, \qquad \Phi_n^{\Sigma}(w):=\sum_{\sigma\in\sym{n}}\sigma\cdot w.
\end{equation}
It follows from \eqref{eqn_cotimes} and this isomorphism that,
\begin{equation} \label{eqn_symlim}
\left(V^{\cotimes n}\right)^{\sym{n}}=\invlim{\alpha}{\left(V_\alpha^{\otimes n}\right)^{\sym{n}}} \quad\text{and}\quad V^{\cotimes n}/\sym{n}=\invlim{\alpha}{V_\alpha^{\otimes n}/\sym{n}},
\end{equation}
hence these are profinite vector spaces.

\begin{defi}
Given a profinite vector space $V$, we define its completed symmetric algebra by,
\[ \widehat{S}(V):=\prod_{n=0}^\infty V^{\cotimes n}/\sym{n}. \]
\end{defi}
Again, using the usual lifting property of limits we define a map,
\[ \widehat{S}(V)\cotimes\widehat{S}(V)\to\widehat{S}(V) \]
continuously extending the usual multiplication on the dense subspace $S(V)$, the usual symmetric algebra, turning $\widehat{S}(V)$ into a commutative algebra of formal power series.

If $\mathfrak{g}$ is any Lie algebra we may define a decreasing filtration;
\begin{equation} \label{eqn_filter}
F_p(\mathfrak{g}), \quad p\geq 1;
\end{equation}
where $F_p(\mathfrak{g})$ is the ideal spanned by finite sums of Lie-monomials in $\mathfrak{g}$ of degree at least $p$. Recall that $\mathfrak{g}$ is nilpotent if $F_p(\mathfrak{g})$ vanishes for some $p$.

\begin{defi}
A profinite pronilpotent Lie algebra $\mathfrak{g}$ is a Lie algebra which is an inverse limit (in the category of Lie algebras)
\[ \mathfrak{g}=\invlim{\alpha}{\mathfrak{g}_\alpha} \]
of a system of finite-dimensional nilpotent Lie algebras $\mathfrak{g}_\alpha$. A morphism of profinite pronilpotent Lie algebras is a \emph{continuous} morphism of the underlying Lie algebras.
\end{defi}

\begin{example}
Given a profinite vector space $V$, consider the subspace $L(V)$ of $T(V)$ that is linearly spanned by all Lie-monomials in $V$ (otherwise known as the free Lie algebra on $V$). We define the completed free Lie algebra $\widehat{L}(V)$ to be the closure of $L(V)$ inside $\widehat{T}(V)$. It follows from \eqref{eqn_tenlim} that,
\[ \widehat{L}(V)=\invlim{\alpha}{\widehat{L}(V_\alpha)}=\invlim{\alpha,p}{\left[\widehat{L}(V_\alpha)\middle/\overline{F_p\left(\widehat{L}(V_\alpha)\right)}\right]}. \]
\end{example}

\begin{example} \label{exm_vecfld}
Given a finite-dimensional vector space $V$, consider the Lie algebra $\Der(\widehat{S}(V))$ consisting of \emph{continuous} derivations of the algebra $\widehat{S}(V)$ of formal power series. Such derivations are otherwise known as \emph{formal vector fields}. This Lie algebra contains a Lie subalgebra consisting of those vector fields that vanish at the origin together with their first derivatives. This Lie subalgebra is a profinite pronilpotent Lie algebra. Similar examples may be easily obtained by replacing $\widehat{S}(V)$ with $\widehat{L}(V)$ or $\widehat{T}(V)$. These Lie algebras and their associated Maurer-Cartan moduli spaces were used in \cite{ncgeom} to describe various moduli spaces of homotopy algebraic structures.
\end{example}

We mention at this point that $\widehat{S}$, $\widehat{T}$ and $\widehat{L}$ are functors on the category of profinite vector spaces. We define now the functor on profinite pronilpotent Lie algebras that is the main subject of this paper. We recall that given a Lie algebra $\mathfrak{g}$, its usual enveloping algebra $\uenv{\mathfrak{g}}$ is the quotient of the tensor algebra $T(\mathfrak{g})$ by the two-sided ideal $J(\mathfrak{g})$ contained in $T(\mathfrak{g})$ generated by the relations,
\begin{equation} \label{eqn_envrel}
x\otimes y = (-1)^{|x||y|}y\otimes x + [x,y]; \quad x,y\in\mathfrak{g}.
\end{equation}

\begin{defi}
Given a profinite pronilpotent Lie algebra $\mathfrak{g}$, we define its completed universal enveloping algebra $\cuenv{\mathfrak{g}}$ as the quotient of the completed tensor algebra $\widehat{T}(\mathfrak{g})$ by the closure of the ideal $J(\mathfrak{g})$ inside $\widehat{T}(\mathfrak{g})$.
\end{defi}

\begin{example} \label{exm_ufreel}
\[ \cuenv{\widehat{L}(V)} = \widehat{T}(V). \]
The argument follows standard lines, cf. \cite{bbak}. Since the elements of $\widehat{L}(V)$ have no constant terms, the multiplication \eqref{eqn_tenmul} yields, by Lemma \ref{lem_extend}, a continuous map from $\widehat{T}(\widehat{L}(V))$ to $\widehat{T}(V)$. Since this map respects the relations \eqref{eqn_envrel} it factors through a map from $\cuenv{\widehat{L}(V)}$ to $\widehat{T}(V)$. The inverse map comes from applying the functor $\widehat{T}$ to the inclusion of $V$ into $\widehat{L}(V)$ and then passing to the quotient $\cuenv{\widehat{L}(V)}$. To check that the maps are mutually inverse it suffices to check on the dense subspaces $T(L(V))$ and $T(V)$, which is trivial.
\end{example}

\begin{example}
Another example comes from taking the completed universal enveloping algebra of a profinite pronilpotent Lie algebra that is derived from the example of formal vector fields discussed in Example \ref{exm_vecfld}. As is well-known; cf. \cite{gerlie}, \cite{operad} and \cite{stalie}; the pre-Lie structure on formal vector fields may be interpreted in the framework of operads as the grafting of the root of one tree onto another. In \cite{hamCK} an example of a profinite pronilpotent Lie algebra was constructed in which this pre-Lie structure corresponds to the graph insertion operation of Connes-Kreimer \cite{CKHopf}. There it was shown that its \emph{completed} universal enveloping algebra recovers the dual of the Connes-Kreimer Hopf algebra, which was defined in \cite{CKHopf}.
\end{example}

\section{The Poincar\'e-Birkhoff-Witt Theorem} \label{sec_PBW}

In this section we will state and prove a formal version of the Poincar\'e-Birkhoff-Witt Theorem for profinite pronilpotent Lie algebras. We will make repeated use of the standard Poincar\'e-Birkhoff-Witt Theorem for ordinary Lie algebras.

If $\mathfrak{g}$ is a profinite pronilpotent Lie algebra then we may use \eqref{eqn_invmap} to define a map,
\begin{equation} \label{eqn_PBWmap}
\Phi^{\Sigma}:\widehat{S}(\mathfrak{g})\to\cuenv{\mathfrak{g}}.
\end{equation}

\begin{theorem} \label{thm_PBWthm}
The map $\Phi^{\Sigma}$ is a linear homeomorphism from $\widehat{S}(\mathfrak{g})$ to $\cuenv{\mathfrak{g}}$, where the latter space carries the quotient topology induced by $\widehat{T}(\mathfrak{g})$.
\end{theorem}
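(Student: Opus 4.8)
The plan is to dispose of the formal points first and then reduce the theorem to the finite-dimensional nilpotent case, where it becomes a completed form of the classical Poincar\'e--Birkhoff--Witt isomorphism. Continuity of $\Phi^{\Sigma}$ is immediate: each $\Phi^{\Sigma}_n$ is continuous by \eqref{eqn_invmap}, so $\Phi^{\Sigma}$ is the composite of the continuous map $\prod_n\Phi^{\Sigma}_n\colon\widehat{S}(\mathfrak{g})\to\widehat{T}(\mathfrak{g})$ with the continuous quotient projection onto $\cuenv{\mathfrak{g}}$. The substance of the theorem is therefore to produce a continuous inverse, i.e.\ to show that $\Phi^{\Sigma}$ is a bijection and an open map.

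Writing $\mathfrak{g}=\invlim{\alpha}{\mathfrak{g}_\alpha}$ with each $\mathfrak{g}_\alpha$ finite-dimensional nilpotent, I would first show that both functors in sight commute with the inverse limit. For $\widehat{S}$ this is immediate from \eqref{eqn_symlim} and the fact that products commute with limits, giving $\widehat{S}(\mathfrak{g})=\invlim{\alpha}{\widehat{S}(\mathfrak{g}_\alpha)}$. For $\cuenv{\cdot}$ the point is that the surjections $\mathfrak{g}\to\mathfrak{g}_\alpha$ induce, via \eqref{eqn_tenlim}, surjections of ideals $J(\mathfrak{g})\to J(\mathfrak{g}_\alpha)$; I would deduce $\overline{J(\mathfrak{g})}=\invlim{\alpha}{\overline{J(\mathfrak{g}_\alpha)}}$ as a closed subspace of $\widehat{T}(\mathfrak{g})$ with surjective structure maps, using that a closed subspace of a profinite space is the inverse limit of the closures of its projections. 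Exactness of the inverse limit for such surjective systems of profinite spaces then yields $\cuenv{\mathfrak{g}}=\invlim{\alpha}{\cuenv{\mathfrak{g}_\alpha}}$, topologised by the quotient topology on each factor. Since $\Phi^{\Sigma}$ is natural in $\mathfrak{g}$ it is the inverse limit of the maps $\Phi^{\Sigma}_\alpha$, and an inverse limit of linear homeomorphisms is a linear homeomorphism; so it suffices to treat a single finite-dimensional nilpotent Lie algebra $\mathfrak{h}$.

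For such an $\mathfrak{h}$ I would run the classical PBW theorem through the augmentation filtration. Let $I\subseteq\uenv{\mathfrak{h}}$ be the augmentation ideal; since $I^N$ is the image of $\bigoplus_{n\geq N}\mathfrak{h}^{\otimes n}$, every monomial of length $\geq N$ lies in $I^N$, so $\uenv{\mathfrak{h}}/I^N$ is finite-dimensional, spanned by the short elements of a PBW basis. A short cofinality argument --- comparing $\ker(\widehat{T}(\mathfrak{h})\to\uenv{\mathfrak{h}}/I^N)$ with the degree neighbourhoods $\widehat{T}(\mathfrak{h})_{\geq N}$ and with $J(\mathfrak{h})$ --- identifies the quotient topology on $\cuenv{\mathfrak{h}}=\widehat{T}(\mathfrak{h})/\overline{J(\mathfrak{h})}$ with the $I$-adic one, so that $\cuenv{\mathfrak{h}}=\invlim{N}{\uenv{\mathfrak{h}}/I^N}$. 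On the symmetric side I would filter $\widehat{S}(\mathfrak{h})$ by the weight associated to the lower central series of $\mathfrak{h}$, each basis vector being weighted by its lower-central-series depth. The classical PBW theorem, applied to the associated graded Lie algebra, shows that $\Phi^{\Sigma}$ carries this weight filtration into the $I$-adic filtration and induces the PBW isomorphism on the associated graded objects; being a filtered map that is an isomorphism on the finite-dimensional associated graded pieces, it induces the desired isomorphism $\widehat{S}(\mathfrak{h})\cong\cuenv{\mathfrak{h}}$ after completion.

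The step where nilpotence is indispensable, and which I expect to be the main obstacle, is the reconciliation of the two topologies on $\widehat{S}(\mathfrak{h})$: the theorem presents $\widehat{S}(\mathfrak{h})$ with the topology of the grading by polynomial degree, whereas the argument above produces the topology of the weight filtration. Because $\mathfrak{h}$ is nilpotent its weights are bounded, say by the nilpotency class $c$, so that $\mathrm{length}\leq\mathrm{weight}\leq c\cdot\mathrm{length}$; the two filtrations are therefore commensurable and define the same topology. This is precisely the mechanism by which pronilpotence guarantees that the straightening of a formal power series into PBW normal form converges: without a bound on the weights the bracket corrections could accumulate in a fixed degree and the inverse of $\Phi^{\Sigma}$ would fail to be continuous. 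The remaining ingredients --- exactness of inverse limits and the description of closed subspaces of profinite spaces as inverse limits of their projections --- are standard facts in the linearly compact setting and would be invoked as needed.
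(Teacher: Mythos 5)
Your proposal is correct in its essentials, but it takes a genuinely different route from the paper's, so it is worth comparing the two. The paper never reduces to the identity $\cuenv{\mathfrak{g}}=\invlim{\alpha}{\cuenv{\mathfrak{g}_\alpha}}$; in fact that identity appears there as a \emph{corollary} of the theorem rather than as an ingredient, which lets the paper avoid all appeals to exactness of inverse limits and to quotients commuting with limits. Instead, for each finite-dimensional nilpotent $\mathfrak{g}_\alpha$ the paper starts from the classical PBW splitting $T(\mathfrak{g}_\alpha)=\Sigma(\mathfrak{g}_\alpha)\oplus J(\mathfrak{g}_\alpha)$ and proves directly that the two projections are continuous for the adic topology: symmetrizing a tensor produces bracket corrections of lower tensor degree but of the same weight for the filtration \eqref{eqn_filter}, and nilpotence ($F_{k+1}(\mathfrak{g}_\alpha)=0$) forces $\pi^\Sigma_\alpha$ to carry $\mathfrak{g}_\alpha^{\otimes nk+r}$ into tensor degrees $>n$. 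The continuous extensions of these projections give $\widehat{T}(\mathfrak{g}_\alpha)=\overline{\Sigma}(\mathfrak{g}_\alpha)\oplus\overline{J}(\mathfrak{g}_\alpha)$, and since the bonding maps respect this splitting, the projections lift to $\widehat{T}(\mathfrak{g})$ itself; bijectivity and bicontinuity of $\Phi^\Sigma$ then fall out simultaneously. Your argument runs on exactly the same engine --- nilpotence makes the lower-central-series weight commensurable with tensor/polynomial degree, and you correctly identify this as the point where pronilpotence is indispensable --- but packages it as a filtered-graded comparison, which buys a conceptually attractive byproduct (the identification of $\cuenv{\mathfrak{h}}$ with the $I$-adic completion $\invlim{N}{\uenv{\mathfrak{h}}/I^N}$) at the price of more infrastructure that you would have to supply: exactness of inverse limits of linearly compact spaces and the description of closures as limits of closures (both standard in this category, though not proved in the paper), and, less innocently, the lemma that the associated graded of $\uenv{\mathfrak{h}}$ for the weight filtration is $\uenv{\mathrm{gr}\,\mathfrak{h}}$ with $\mathrm{gr}\,\Phi^\Sigma$ equal to the symmetrization map of $\mathrm{gr}\,\mathfrak{h}$ --- a genuine (if classical) PBW-type statement, not a formality.

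Two points of your sketch need repair when written out. First, the associated-graded comparison must be taken with the \emph{weight} filtration on both sides: weight $N$ lands only in $I^{\lceil N/c\rceil}$, so a map filtered from weight to $I$-adic induces no map of associated gradeds; the correct order of operations is gr-isomorphism for the weight filtrations, then completion, then commensurability to identify the weight-completions with $\widehat{S}(\mathfrak{h})$ and with the $I$-adic completion respectively (you state the commensurability only on the symmetric side). Second, your ``short cofinality argument'' does work but should be spelled out, e.g.\ via $\overline{J(\mathfrak{h})+T_{\geq N}(\mathfrak{h})}=\overline{J(\mathfrak{h})}+\widehat{T}_{\geq N}(\mathfrak{h})$ and $\bigcap_N\bigl(\overline{J(\mathfrak{h})}+\widehat{T}_{\geq N}(\mathfrak{h})\bigr)=\overline{J(\mathfrak{h})}$, together with the fact that a continuous linear bijection of linearly compact spaces is a homeomorphism; notably this step needs no nilpotence.
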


\begin{proof}
Since $\mathfrak{g}$ is an inverse limit of finite-dimensional nilpotent Lie algebras $\mathfrak{g}_\alpha$, we begin by proving that \eqref{eqn_PBWmap} is a linear bijection for each $\mathfrak{g}_\alpha$. This is equivalent to showing that the completed tensor algebra $\widehat{T}(\mathfrak{g}_\alpha)$ splits as a direct sum,
\begin{equation} \label{eqn_tenspl}
\widehat{T}(\mathfrak{g}_\alpha)=\overline{\Sigma}(\mathfrak{g}_\alpha)\oplus\overline{J}(\mathfrak{g}_\alpha),
\end{equation}
where $\overline{J}(\mathfrak{g}_\alpha)$ is the closure of the ideal $J(\mathfrak{g}_\alpha)$ inside $\widehat{T}(\mathfrak{g}_\alpha)$ and
\[ \overline{\Sigma}(\mathfrak{g}_\alpha)=\prod_{n=0}^\infty (\mathfrak{g}_\alpha^{\otimes n})^{\sym{n}} \]
is the subspace of invariants of the symmetric groups, which coincides with the closure inside $\widehat{T}(\mathfrak{g}_\alpha)$ of the usual subspace of invariants,
\[ \Sigma(\mathfrak{g}_\alpha):=\bigoplus_{n=0}^\infty (\mathfrak{g}_\alpha^{\otimes n})^{\sym{n}}. \]
Note that since $\mathfrak{g}_\alpha$ is finite-dimensional and hence discrete, the topology on $\widehat{T}(\mathfrak{g}_\alpha)$ is just the usual adic topology.

It follows from the standard Poincar\'e-Birkhoff-Witt Theorem for $\mathfrak{g}_\alpha$ that the usual tensor algebra splits as,
\[ T(\mathfrak{g}_\alpha)=\Sigma(\mathfrak{g}_\alpha)\oplus J(\mathfrak{g}_\alpha). \]
We may therefore define projection maps,
\[ \pi_\alpha^\Sigma:T(\mathfrak{g}_\alpha)\to\Sigma(\mathfrak{g}_\alpha) \quad\text{and}\quad \pi_\alpha^J:T(\mathfrak{g}_\alpha)\to J(\mathfrak{g}_\alpha) \]
onto these subspaces. We will prove that these maps are continuous in the adic topology.

Consider the filtration $F_p(\mathfrak{g}_\alpha)$ defined by \eqref{eqn_filter}. We may specify a filtration on $\mathfrak{g}_\alpha^{\otimes n}$ by defining $F_p(\mathfrak{g}_\alpha^{\otimes n})$ to be the subspace of $\mathfrak{g}_\alpha^{\otimes n}$ that is spanned by finite sums of elements from the family of subspaces;
\[ F_{p_1}(\mathfrak{g}_\alpha)\otimes\ldots\otimes F_{p_n}(\mathfrak{g}_\alpha), \quad p_1 + \cdots + p_n \geq p. \]
This leads to a decreasing filtration of the tensor algebra;
\[ F_pT(\mathfrak{g}_\alpha):=\bigoplus_{n=0}^\infty F_p\left(\mathfrak{g}_\alpha^{\otimes n}\right), \quad p\geq 0. \]

Now observe that if the term on the left-hand side of the relation \eqref{eqn_envrel} lies in $F_pT(\mathfrak{g}_a)$ then the two terms on the right-hand side will as well. More generally, if $w\in F_p(\mathfrak{g}_\alpha^{\otimes n})$ and $\sigma\in\sym{n}$ is any permutation then we may write the difference between $w$ and $\sigma\cdot w$ as the sum of an element in $J(\mathfrak{g}_\alpha)$ with an element in $F_p(\mathfrak{g}_\alpha^{\otimes n-1})$. Hence we may write,
\[ w=\frac{1}{n!}\Phi_n^\Sigma(w) + \eta + v, \]
for some $\eta\in J(\mathfrak{g}_\alpha)$ and $v\in F_p(\mathfrak{g}_\alpha^{\otimes n-1})$. It follows by induction on $n$ that $\pi_\alpha^\Sigma$ maps $F_p(\mathfrak{g}_\alpha^{\otimes n})$ to,
\[ F_pT_{\leq n}(\mathfrak{g}_\alpha):=\bigoplus_{i=0}^n F_p\left(\mathfrak{g}_\alpha^{\otimes i}\right). \]

Now, since $\mathfrak{g}_\alpha$ is nilpotent, there exists a $k$ such that $F_{k+1}(\mathfrak{g}_\alpha)$ vanishes. It follows that $F_{ik+1}(\mathfrak{g}_\alpha^{\otimes i})$ also vanishes, for all $i\geq 0$. Since $\mathfrak{g}_\alpha^{\otimes j}$ is contained in $F_j(\mathfrak{g}_\alpha^{\otimes j})$ for every $j\geq 0$, it follows from the above that for all $r>0$ the projection $\pi_\alpha^\Sigma$ maps $\mathfrak{g}_\alpha^{\otimes nk+r}$ to,
\[ \bigoplus_{i=0}^{nk+r} F_{nk+r}\left(\mathfrak{g}_\alpha^{\otimes i}\right) = \bigoplus_{i=n+1}^{nk+r} F_{nk+r}\left(\mathfrak{g}_\alpha^{\otimes i}\right). \]
In other words, $\pi_\alpha^\Sigma$ maps terms in $T(\mathfrak{g}_\alpha)$ of order greater than $nk$ to terms of order greater than $n$, that is $\pi_\alpha^\Sigma$ is continuous in the adic topology.

It follows that $\pi_\alpha^J=\id-\pi_\alpha^\Sigma$ is also continuous and hence by Lemma \ref{lem_extend} these projection maps extend to continuous maps,
\begin{equation} \label{eqn_prjsys}
\bar{\pi}_\alpha^\Sigma:\widehat{T}(\mathfrak{g}_\alpha)\to\overline{\Sigma}(\mathfrak{g}_\alpha) \quad\text{and}\quad \bar{\pi}_\alpha^J:\widehat{T}(\mathfrak{g}_\alpha)\to\overline{J}(\mathfrak{g}_\alpha),
\end{equation}
such that $\id=\bar{\pi}_\alpha^\Sigma+\bar{\pi}_\alpha^J$. This verifies \eqref{eqn_tenspl}.

By \eqref{eqn_tenlim} the tensor algebra $\widehat{T}(\mathfrak{g})$ is the inverse limit of the algebras $\widehat{T}(\mathfrak{g}_\alpha)$. Since the bonding maps in this inverse system are induced by morphisms of Lie algebras, they respect the decomposition \eqref{eqn_tenspl}. It follows that both systems of mappings defined in \eqref{eqn_prjsys} lift to continuous maps $\bar{\pi}^\Sigma$ and $\bar{\pi}^J$ defined on the inverse limit $\widehat{T}(\mathfrak{g})$.

Consider the representation of $T(\mathfrak{g})$ as the direct sum of the invariants $\Sigma(\mathfrak{g})$ and the ideal $J(\mathfrak{g})$ which arises by applying the standard Poincar\'e-Birkhoff-Witt Theorem to $\mathfrak{g}$. It is a straightforward observation from the definitions that the maps $\bar{\pi}^\Sigma$ and $\bar{\pi}^J$ agree on $T(\mathfrak{g})$ with the projection maps onto these respective summands. It follows that we have defined maps
\begin{equation} \label{eqn_prjmps}
\bar{\pi}^\Sigma:\widehat{T}(\mathfrak{g})\to\overline{\Sigma}(\mathfrak{g}) \quad\text{and}\quad \bar{\pi}^J:\widehat{T}(\mathfrak{g})\to\overline{J}(\mathfrak{g})
\end{equation}
such that $\id=\bar{\pi}^\Sigma+\bar{\pi}^J$, where $\overline{\Sigma}(\mathfrak{g})$ is isomorphic to $\widehat{S}(\mathfrak{g})$ via \eqref{eqn_invmap} as previously discussed. Therefore $\widehat{T}(\mathfrak{g})$ splits as the direct sum of the invariant subspace $\overline{\Sigma}(\mathfrak{g})$ and the ideal $\overline{J}(\mathfrak{g})$. This proves that \eqref{eqn_PBWmap} is a linear bijection from $\widehat{S}(\mathfrak{g})$ to $\cuenv{\mathfrak{g}}$. The equivalence of the two respective topologies on $\widehat{S}(\mathfrak{g})$ and $\cuenv{\mathfrak{g}}$ follows from the fact that the projection maps \eqref{eqn_prjmps} onto these summands are continuous.
\end{proof}

\section{Corollaries} \label{sec_cor}

In this section we will explain a few natural corollaries of Theorem \ref{thm_PBWthm}. In what follows $\cuenv{\mathfrak{g}}$ carries the quotient topology induced by $\widehat{T}(\mathfrak{g})$.

\begin{cor}
The completed universal enveloping algebra $\cuenv{\mathfrak{g}}$ of a profinite pronilpotent Lie algebra $\mathfrak{g}$ is complete and contains $\uenv{\mathfrak{g}}$ as a dense subspace.
\end{cor}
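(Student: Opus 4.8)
The plan is to deduce both assertions directly from Theorem \ref{thm_PBWthm}. For completeness, I would first note that $\widehat{S}(\mathfrak{g})=\prod_{n=0}^\infty \mathfrak{g}^{\cotimes n}/\sym{n}$ is, by \eqref{eqn_symlim}, a product of profinite vector spaces and is therefore itself profinite; in particular it is complete, since a product of complete Hausdorff spaces is complete in the product topology, which here coincides with the inverse limit topology. Because Theorem \ref{thm_PBWthm} exhibits $\Phi^\Sigma$ as a linear homeomorphism from $\widehat{S}(\mathfrak{g})$ to $\cuenv{\mathfrak{g}}$, and a linear homeomorphism of topological vector spaces preserves completeness, it follows that $\cuenv{\mathfrak{g}}$ is complete. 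This route sidesteps the fact that quotients of complete spaces need not be complete in general: we transport completeness across the homeomorphism rather than reasoning about the quotient directly.

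For the density statement, I would let $q:\widehat{T}(\mathfrak{g})\to\cuenv{\mathfrak{g}}$ denote the quotient map. Since $J(\mathfrak{g})$ is contained in its closure $\overline{J}(\mathfrak{g})$, the restriction of $q$ to $T(\mathfrak{g})$ factors through $\uenv{\mathfrak{g}}=T(\mathfrak{g})/J(\mathfrak{g})$, yielding a canonical map $\iota:\uenv{\mathfrak{g}}\to\cuenv{\mathfrak{g}}$ whose image is exactly $q(T(\mathfrak{g}))$. As $T(\mathfrak{g})$ is dense in $\widehat{T}(\mathfrak{g})$ and $q$ is continuous and surjective, we have $\overline{q(T(\mathfrak{g}))}\supseteq q(\overline{T(\mathfrak{g})})=q(\widehat{T}(\mathfrak{g}))=\cuenv{\mathfrak{g}}$, so the image of $\iota$ is dense. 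Alternatively, one could invoke the homeomorphism $\Phi^\Sigma$ together with the density of $S(\mathfrak{g})$ in $\widehat{S}(\mathfrak{g})$, observing that $\Phi^\Sigma$ carries $S(\mathfrak{g})$ onto this same image.

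The crux, and the step I expect to be the main obstacle, is verifying that $\iota$ is injective, equivalently that $T(\mathfrak{g})\cap\overline{J}(\mathfrak{g})=J(\mathfrak{g})$; the inclusion $\supseteq$ is immediate, but the reverse requires that passing to the closure introduces no new elements of $T(\mathfrak{g})$ into the ideal. Here I would appeal to the splitting furnished by the proof of Theorem \ref{thm_PBWthm}. The continuous projection $\bar{\pi}^J:\widehat{T}(\mathfrak{g})\to\overline{J}(\mathfrak{g})$ of \eqref{eqn_prjmps} restricts on the dense subspace $T(\mathfrak{g})$ to the ordinary projection onto $J(\mathfrak{g})$ coming from the standard splitting $T(\mathfrak{g})=\Sigma(\mathfrak{g})\oplus J(\mathfrak{g})$. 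Hence for any $x\in T(\mathfrak{g})\cap\overline{J}(\mathfrak{g})$ we have $x=\bar{\pi}^J(x)\in J(\mathfrak{g})$, since $x$ lies in $\overline{J}(\mathfrak{g})$ while $\bar{\pi}^J$ agrees with the projection into $J(\mathfrak{g})$ on $T(\mathfrak{g})$. This establishes injectivity, so $\iota$ identifies $\uenv{\mathfrak{g}}$ with the dense subspace $q(T(\mathfrak{g}))$ of $\cuenv{\mathfrak{g}}$, completing the argument.
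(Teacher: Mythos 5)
Your proposal is correct, and the completeness half coincides with the paper's argument: $\widehat{S}(\mathfrak{g})$ is profinite, hence complete, and completeness transports across the linear homeomorphism $\Phi^\Sigma$. For the embedding-and-density half you take a genuinely different route. The paper invokes the standard Poincar\'e--Birkhoff--Witt theorem to identify $\uenv{\mathfrak{g}}$ with $S(\mathfrak{g})$ via symmetrization; since that map and $\Phi^\Sigma$ are given by the same formula \eqref{eqn_invmap}, the canonical map $\uenv{\mathfrak{g}}\to\cuenv{\mathfrak{g}}$ corresponds under these identifications to the inclusion of $S(\mathfrak{g})$ into $\widehat{S}(\mathfrak{g})$, from which injectivity and density both follow at once. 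You instead obtain density from the elementary fact that the continuous quotient map $q$ carries the dense subspace $T(\mathfrak{g})$ onto a dense subspace of $\cuenv{\mathfrak{g}}$ (no PBW theorem needed for this step), and you isolate injectivity as the identity $T(\mathfrak{g})\cap\overline{J}(\mathfrak{g})=J(\mathfrak{g})$, proved with the projection $\bar{\pi}^J$ of \eqref{eqn_prjmps}. That argument is sound, but note two things: first, it relies on the internals of the proof of Theorem \ref{thm_PBWthm} (the continuity of the splitting projections), not merely on its statement, whereas the paper's corollary proof uses only the statements of the two PBW theorems plus the compatibility of the two symmetrization maps; second, you silently use that $\bar{\pi}^J$ fixes every element of $\overline{J}(\mathfrak{g})$, which deserves a line of justification --- $\bar{\pi}^J$ agrees with the ordinary projection on $T(\mathfrak{g})$, hence fixes $J(\mathfrak{g})$, hence by continuity into a Hausdorff space it fixes the closure $\overline{J}(\mathfrak{g})$. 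The trade-off is worthwhile to understand: your version makes fully explicit the one nontrivial point that the paper compresses into the word ``corresponds'' --- namely that passing to the closure of the ideal $J(\mathfrak{g})$ introduces no new elements of $T(\mathfrak{g})$ --- while the paper's version is shorter and treats injectivity and density in a single stroke.
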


\begin{proof}
From the point of view of Theorem \ref{thm_PBWthm} and the standard Poincar\'e-Birkhoff-Witt Theorem, the canonical map from $\uenv{\mathfrak{g}}$ to $\cuenv{\mathfrak{g}}$ corresponds to the canonical embedding of $S(\mathfrak{g})$ as a dense subspace of $\widehat{S}(\mathfrak{g})$. The latter space is complete as it carries the inverse limit topology described in Definition \ref{def_profvs}.
\end{proof}

\begin{cor}
If $\mathfrak{g}=\invlim{\alpha}{\mathfrak{g}_\alpha}$ is a profinite pronilpotent Lie algebra then $\cuenv{\mathfrak{g}}=\invlim{\alpha}{\cuenv{\mathfrak{g}_\alpha}}$.
\end{cor}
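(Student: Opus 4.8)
The plan is to deduce the result from the Poincar\'e-Birkhoff-Witt isomorphism of Theorem \ref{thm_PBWthm} together with the fact that the completed symmetric algebra functor manifestly commutes with the formation of inverse limits. First I would record that $\widehat{S}(\mathfrak{g})=\invlim{\alpha}{\widehat{S}(\mathfrak{g}_\alpha)}$. This is immediate from the definition $\widehat{S}(\mathfrak{g})=\prod_{n}\mathfrak{g}^{\cotimes n}/\sym{n}$ together with the identification $\mathfrak{g}^{\cotimes n}/\sym{n}=\invlim{\alpha}{\mathfrak{g}_\alpha^{\otimes n}/\sym{n}}$ of \eqref{eqn_symlim}, since the product over $n$ and the inverse limit over $\alpha$ may be interchanged, both being limits.

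Next I would invoke Theorem \ref{thm_PBWthm} for each finite-dimensional nilpotent Lie algebra $\mathfrak{g}_\alpha$, and for $\mathfrak{g}$ itself, to obtain linear homeomorphisms $\Phi^\Sigma$. The crucial point is that these homeomorphisms are natural: the map \eqref{eqn_PBWmap} is built from the symmetrization operator \eqref{eqn_invmap}, which is functorial in its argument, followed by the quotient $\widehat{T}(\mathfrak{g})\to\cuenv{\mathfrak{g}}$, which is a natural transformation of functors on profinite pronilpotent Lie algebras. Consequently the maps $\Phi^\Sigma$ for the various $\mathfrak{g}_\alpha$ commute with the bonding maps of the two inverse systems $\{\widehat{S}(\mathfrak{g}_\alpha)\}$ and $\{\cuenv{\mathfrak{g}_\alpha}\}$, so that they assemble into an isomorphism of inverse systems. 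Passing to the inverse limit and combining with the previous paragraph and Theorem \ref{thm_PBWthm} for $\mathfrak{g}$ then yields
\[ \cuenv{\mathfrak{g}}\cong\widehat{S}(\mathfrak{g})=\invlim{\alpha}{\widehat{S}(\mathfrak{g}_\alpha)}\cong\invlim{\alpha}{\cuenv{\mathfrak{g}_\alpha}}, \]
which is the desired statement.

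The step I expect to require the most care is not the naturality verification itself but rather recognising why a direct argument bypassing Theorem \ref{thm_PBWthm} does not work. One is tempted to write $\cuenv{\mathfrak{g}}=\widehat{T}(\mathfrak{g})/\overline{J}(\mathfrak{g})$, use $\widehat{T}(\mathfrak{g})=\invlim{\alpha}{\widehat{T}(\mathfrak{g}_\alpha)}$ from \eqref{eqn_tenlim}, and simply pass the quotient through the limit; but inverse limits do not in general commute with quotients. What rescues the argument is precisely the splitting $\widehat{T}(\mathfrak{g}_\alpha)=\overline{\Sigma}(\mathfrak{g}_\alpha)\oplus\overline{J}(\mathfrak{g}_\alpha)$ established in the proof of Theorem \ref{thm_PBWthm}, whose bonding maps were shown there to respect the decomposition. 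This realizes each $\cuenv{\mathfrak{g}_\alpha}$ as the complemented subspace $\overline{\Sigma}(\mathfrak{g}_\alpha)$ of invariants, and invariants do commute with inverse limits by \eqref{eqn_symlim}; it is this complementedness, supplied by Poincar\'e-Birkhoff-Witt, that upgrades the naive formal manipulation into a genuine identity of profinite vector spaces.
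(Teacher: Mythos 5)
Your proposal is correct and follows essentially the same route as the paper, whose entire proof is: deduce $\widehat{S}(\mathfrak{g})=\invlim{\alpha}{\widehat{S}(\mathfrak{g}_\alpha)}$ from \eqref{eqn_symlim}, then apply Theorem \ref{thm_PBWthm}. Your explicit verification that the maps $\Phi^{\Sigma}$ are natural and hence assemble into an isomorphism of inverse systems is exactly the step the paper leaves implicit in the phrase ``Now apply Theorem \ref{thm_PBWthm}.''
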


\begin{proof}
It follows from \eqref{eqn_symlim} that $\widehat{S}(\mathfrak{g})=\invlim{\alpha}{\widehat{S}(\mathfrak{g}_\alpha)}$. Now apply Theorem \ref{thm_PBWthm}.
\end{proof}

\begin{cor} \label{cor_canemb}
The canonical map from a profinite pronilpotent Lie algebra $\mathfrak{g}$ to its completed universal enveloping algebra $\cuenv{\mathfrak{g}}$ is an embedding (in both the topological and algebraic sense) of $\mathfrak{g}$ onto a topologically closed commutator Lie subalgebra of $\cuenv{\mathfrak{g}}$.
\end{cor}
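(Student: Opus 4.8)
The plan is to reduce everything to the Poincar\'e--Birkhoff--Witt identification of Theorem \ref{thm_PBWthm} together with the product structure of $\widehat{S}(\mathfrak{g})$. First I would pin down the canonical map. By definition it is the composite of the inclusion of the degree-one summand $\mathfrak{g}=\mathfrak{g}^{\cotimes 1}$ into $\widehat{T}(\mathfrak{g})$ with the quotient map onto $\cuenv{\mathfrak{g}}$. Since $\sym{1}$ is trivial, the map $\Phi_1^{\Sigma}$ of \eqref{eqn_invmap} is the identity in degree one, so under the homeomorphism $\Phi^{\Sigma}$ of Theorem \ref{thm_PBWthm} the canonical map is identified with the inclusion of the factor $\mathfrak{g}^{\cotimes 1}/\sym{1}=\mathfrak{g}$ into the product $\widehat{S}(\mathfrak{g})=\prod_{n\geq 0}\mathfrak{g}^{\cotimes n}/\sym{n}$.

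Granting this identification, the topological assertions become elementary statements about products of profinite vector spaces. Injectivity is clear because the inclusion of a single factor into a product is injective and $\Phi^{\Sigma}$ is a bijection. The subspace topology that a factor inherits from a product coincides with its own topology, so the composite of the factor inclusion (itself a topological embedding) with the homeomorphism $\Phi^{\Sigma}$ is a homeomorphism onto its image, i.e. a topological embedding. For closedness I would observe that the degree-one factor is the intersection of the kernels of the continuous projections of $\widehat{S}(\mathfrak{g})$ onto the remaining factors $\mathfrak{g}^{\cotimes n}/\sym{n}$ with $n\neq 1$; since each such factor is Hausdorff these kernels are closed, hence so is their intersection. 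Transporting along the homeomorphism $\Phi^{\Sigma}$ then shows that the image of $\mathfrak{g}$ is closed in $\cuenv{\mathfrak{g}}$.

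It remains to identify the image as a commutator Lie subalgebra. Here the content is exactly the defining relation \eqref{eqn_envrel}: for $x,y\in\mathfrak{g}$ regarded as degree-one elements of $\cuenv{\mathfrak{g}}$ we have $xy-(-1)^{|x||y|}yx=[x,y]$, which again lies in $\mathfrak{g}$. Thus the image is stable under the commutator bracket of $\cuenv{\mathfrak{g}}$, and the canonical map intertwines the Lie bracket of $\mathfrak{g}$ with this commutator, exhibiting it as a morphism of Lie algebras onto a Lie subalgebra. Combined with the preceding paragraph this yields the full statement.

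I do not anticipate a serious obstacle, since each ingredient is either supplied by Theorem \ref{thm_PBWthm} or is a standard fact about product topologies. The only point genuinely requiring care is the initial identification of the canonical map with the inclusion of the degree-one factor, as it is precisely this step that lets the topological claims be read off directly from the product decomposition of $\widehat{S}(\mathfrak{g})$ rather than argued awkwardly within the quotient topology on $\cuenv{\mathfrak{g}}$.
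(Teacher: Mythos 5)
Your proposal is correct and follows essentially the same route as the paper: the paper's (one-sentence) proof likewise uses Theorem \ref{thm_PBWthm} to identify the canonical map with the inclusion of $\mathfrak{g}$ as the degree-one factor of $\widehat{S}(\mathfrak{g})$, which is a topological embedding onto a closed subspace. You simply spell out the details the paper leaves implicit --- the identification via $\Phi_1^{\Sigma}=\id$, the closedness of a factor in a Hausdorff product, and the verification via \eqref{eqn_envrel} that the image is a commutator Lie subalgebra --- all of which are accurate.
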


\begin{proof}
From the viewpoint of Theorem \ref{thm_PBWthm} this map is the canonical inclusion of $\mathfrak{g}$ into $\widehat{S}(\mathfrak{g})$, which is a topological embedding of $\mathfrak{g}$ onto a closed subspace of $\widehat{S}(\mathfrak{g})$.
\end{proof}

Now we are in a position to describe the precise sense in which the completed free Lie algebra is free.

\begin{cor} \label{cor_extend}
Every continuous linear map from a profinite vector space $V$ to a profinite pronilpotent Lie algebra $\mathfrak{g}$ has a unique extension to a continuous Lie algebra morphism from $\widehat{L}(V)$ to $\mathfrak{g}$.
\end{cor}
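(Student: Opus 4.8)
The plan is to extend $f$ first over the ordinary free Lie algebra, using its universal property, and then to push the extension through the completion by means of Lemma~\ref{lem_extend}. Denote by $f\colon V\to\mathfrak{g}$ the given continuous linear map. The universal property of the free Lie algebra $L(V)$ produces a unique Lie algebra morphism $\tilde{f}\colon L(V)\to\mathfrak{g}$ whose restriction to $V$ is $f$. Provided $\tilde{f}$ is continuous for the subspace topology that $L(V)$ inherits from $\widehat{T}(V)$, the density of $L(V)$ in $\widehat{L}(V)$ together with the completeness and Hausdorffness of the profinite space $\mathfrak{g}$ allow Lemma~\ref{lem_extend} to supply a unique continuous linear extension $\widehat{f}\colon\widehat{L}(V)\to\mathfrak{g}$.

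The substantive point is the continuity of $\tilde{f}$, and this is where the pronilpotence of $\mathfrak{g}$ is indispensable. Writing $\mathfrak{g}=\invlim{\alpha}{\mathfrak{g}_\alpha}$ with each $\mathfrak{g}_\alpha$ finite-dimensional and nilpotent, it is enough to verify that each composite $\tilde{f}_\alpha\colon L(V)\to\mathfrak{g}\to\mathfrak{g}_\alpha$ is continuous. The linear map $f_\alpha\colon V\to\mathfrak{g}_\alpha$ is continuous into a discrete finite-dimensional target, so it factors as $f_\alpha=g\circ p_\beta$ through the projection $p_\beta\colon V\to V_\beta$ onto a suitable finite-dimensional quotient, with $g\colon V_\beta\to\mathfrak{g}_\alpha$ linear; consequently $\tilde{f}_\alpha$ factors through the induced morphism $L(V)\to L(V_\beta)$. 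Choosing $k$ with $F_{k+1}(\mathfrak{g}_\alpha)=0$ and using that a Lie algebra morphism carries $F_{k+1}$ of its source into $F_{k+1}$ of its target, I find that $\tilde{f}_\alpha$ annihilates every Lie-monomial of degree at least $k+1$ and depends on the remaining monomials only through their images in $V_\beta^{\otimes\bullet}$. Hence the basic neighborhood of zero consisting of those elements of $L(V)$ whose image in $\widehat{T}(V_\beta)$ carries no term of degree less than $k+1$ lies in the kernel of $\tilde{f}_\alpha$, which establishes continuity. It is precisely the vanishing of $F_{k+1}(\mathfrak{g}_\alpha)$ that confines the contributing degrees to a finite range and makes such a neighborhood available.

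It remains to check that $\widehat{f}$ is a morphism of Lie algebras and that it is unique. Since the bracket on $\widehat{L}(V)\subseteq\widehat{T}(V)$ (the commutator of the multiplication \eqref{eqn_tenmul}) and the bracket on $\mathfrak{g}$ are both continuous, the assignment $(x,y)\mapsto\widehat{f}([x,y])-[\widehat{f}(x),\widehat{f}(y)]$ is a continuous map into the Hausdorff space $\mathfrak{g}$; it vanishes on the dense subset $L(V)\times L(V)$ because $\tilde{f}$ is already a Lie morphism there, and therefore vanishes identically. For uniqueness, any continuous Lie algebra morphism $\widehat{L}(V)\to\mathfrak{g}$ restricting to $f$ on $V$ must agree with $\tilde{f}$ on $L(V)$ by the universal property of $L(V)$, and hence must coincide with $\widehat{f}$ throughout by density.

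The chief obstacle is the continuity claim of the second paragraph. Without a bound on the degrees, the image under $\tilde{f}$ of a convergent series of Lie-monomials need not converge in $\mathfrak{g}$; it is the pronilpotence hypothesis that forces all but finitely many degrees to die in each $\mathfrak{g}_\alpha$ and so rescues continuity.
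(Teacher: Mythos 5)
Your proof is correct, but it takes a genuinely different route from the paper's. The paper deduces the corollary from Theorem \ref{thm_PBWthm}: applying the functor $\widehat{T}$ to $\psi:V\to\mathfrak{g}$, passing to the quotient $\cuenv{\mathfrak{g}}$ and restricting to $\widehat{L}(V)$ gives a map $\psi^{\sharp}:\widehat{L}(V)\to\cuenv{\mathfrak{g}}$ whose continuity is automatic from functoriality; the only issue is forcing the image to lie in $\mathfrak{g}$ itself, and this is settled by Corollary \ref{cor_canemb} (a consequence of the PBW theorem), which says $\iota(\mathfrak{g})$ is a \emph{closed} subspace of $\cuenv{\mathfrak{g}}$, so that $\psi^{\sharp}$ carries the closure $\widehat{L}(V)$ of $L(V)$ into $\iota(\mathfrak{g})$ and the extension is $\iota^{-1}\circ\psi^{\sharp}$. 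You instead argue directly: the universal property of $L(V)$ gives the algebraic extension, and you prove its continuity by hand before invoking Lemma \ref{lem_extend}. Your continuity argument is sound: since each $f_\alpha:V\to\mathfrak{g}_\alpha$ factors through a surjective projection $p_\beta:V\to V_\beta$, uniqueness in the universal property gives $\tilde{f}_\alpha=\tilde{g}\circ L(p_\beta)$; an element of $L(V)$ whose image in $\widehat{T}(V_\beta)$ has no terms of degree at most $k$ is sent by $L(p_\beta)$ into $F_{k+1}(L(V_\beta))$, which $\tilde{g}$ annihilates because Lie morphisms respect the filtration \eqref{eqn_filter} and $F_{k+1}(\mathfrak{g}_\alpha)=0$; and a linear map to a discrete space whose kernel contains a neighborhood of zero is continuous. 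This is precisely the PBW-free proof whose existence the paper asserts, without details, in the Remark following the corollary. As for what each approach buys: the paper's route is shorter and keeps all the topological work concentrated in Theorem \ref{thm_PBWthm}, where pronilpotence enters once through the continuity of the projections $\pi_\alpha^{\Sigma}$; your route is self-contained and more elementary, never invoking the PBW theorem, and it makes explicit exactly where and why pronilpotence is indispensable, at the cost of carrying out the continuity estimate yourself.
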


\begin{proof}
Applying the functor $\widehat{T}$ to a continuous linear map $\psi$ from $V$ to $\mathfrak{g}$, then passing to the quotient $\cuenv{\mathfrak{g}}$ and restricting to the Lie subalgebra $\widehat{L}(V)$ yields a continuous morphism of commutator Lie algebras,
\[ \psi^{\sharp}:\widehat{L}(V)\to\cuenv{\mathfrak{g}}. \]
Consider the embedding $\iota:\mathfrak{g}\to\cuenv{\mathfrak{g}}$ described in Corollary \ref{cor_canemb}. It is apparent that $\psi^{\sharp}$ maps $L(V)$ to the embedded image $\iota(\mathfrak{g})$, which is a closed subspace of $\cuenv{\mathfrak{g}}$ by Corollary \ref{cor_canemb}; therefore $\psi^\sharp$ maps the closure $\widehat{L}(V)$ here as well. The desired extension is therefore given by the composition $\iota^{-1}\circ\psi^\sharp$. The uniqueness of this extension is apparent.
\end{proof}

\begin{rem}
It should be mentioned that Corollary \ref{cor_extend} may be proved without the aid of Theorem \ref{thm_PBWthm}. Applying this Corollary to the case $\mathfrak{g}:=\widehat{L}(W)$ yields a useful description of the morphisms of a $C_\infty$-algebra, cf. \cite{ncgeom}.
\end{rem}

\end{document}